\documentclass[11pt,a4paper]{article}
\usepackage{amsfonts,epsf,amsmath,amssymb,graphicx,tabularx,booktabs}

\newtheorem{theorem}{\bf Theorem}[section]

\newtheorem{proposition}[theorem]{\bf Proposition}

\newtheorem{remark}[theorem]{\bf Remark}

\newcommand{\proof}{\noindent{\bf Proof.\ }}
\newcommand{\qed}{\hfill $\blacksquare$ \bigskip}

% Centering the EPS graphics inside the FIGURE environment

\textwidth 15.5cm \textheight 22.5cm \oddsidemargin 0.4cm \evensidemargin 0.4cm \voffset -2cm

\begin{document}

\title{\bf{Note on PI and Szeged indices}}

\author{
Aleksandar Ili\' c \\
Faculty of Sciences and Mathematics \\
University of Ni\v s, Serbia \\
e-mail: \tt{aleksandari@gmail.com} \\
}

\date{\today}

\maketitle

\begin{abstract}
In theoretical chemistry molecular structure descriptors are used for modeling physico-chemical,
pharmacologic, toxicologic, biological and other properties of chemical compounds. In this paper we
study distance-based graph invariants and present some improved and corrected sharp inequalities
for PI, vertex PI, Szeged and edge Szeged topological indices, involving the number of vertices and
edges, the diameter, the number of triangles and the Zagreb indices. In addition, we give a
complete characterization of the extremal graphs.
\end{abstract}

{\bf Key words}: Molecular descriptors; PI index; Szeged index; Zagreb index; Distance in graphs.
\vskip 0.1cm {{\bf AMS Classifications:} 05C12, 92E10.} \vskip 0.1cm

%%%%%%%%%%%%%%%%%%%%%%%%%%%%%%%%%%%%%%%%%%%%%%%%%%%%%%%
\section{Introduction}
\label{sec:intro}
%%%%%%%%%%%%%%%%%%%%%%%%%%%%%%%%%%%%%%%%%%%%%%%%%%%%%%%

Let $G = (V, E)$ be a connected simple graph with $n = |V|$ vertices and $m = |E|$ edges. For
vertices $u, v \in V$, the distance $d (u, v)$ is defined as the length of the shortest path
between $u$ and $v$ in $G$. The diameter $diam (G)$ is the greatest distance between two vertices
of $G$. The distance between the vertex $w$ and the edge $e = uv$ is defined as $d' (w, e) = \min
(d (w, u), d (w, v))$.

In theoretical chemistry molecular structure descriptors (also called topological indices) are used
for modeling physico-chemical, pharmacologic, toxicologic, biological and other properties of
chemical compounds \cite{GuPo86}. There exist several types of such indices, especially those based
on vertex and edge distances. Arguably the best known of these indices is the Wiener index $W$,
defined as the sum of distances between all pairs of vertices of the molecular graph
\cite{DoEnGu01, Ro02},
$$
W (G) = \sum_{u, v \in V} d (u, v).
$$

Besides of use in chemistry, it was independently studied due to its relevance in social science,
architecture and graph theory. With considerable success in chemical graph theory, various
extensions and generalizations of the Wiener index are recently put forward \cite{LuZh09,ToCo00}.

Let $e = uv$ be an edge of the graph $G$. The number of vertices of $G$ whose distance to the
vertex $u$ is smaller than the distance to the vertex $v$ is denoted by $n_u (e)$. Analogously,
$n_v (e)$ is the number of vertices of $G$ whose distance to the vertex $v$ is smaller than the
distance to the vertex $u$. Similarly, $m_u (e)$ denotes the number of edges of $G$ whose distance
to the vertex $u$ is smaller than the distance to the vertex $v$. We now define four topological
indices: PI, vertex PI, Szeged and edge Szeged indices of $G$ as follows

% \cite{ChWu09,DaGu10,DaGu09, FaArMoAs10, GuPo86, GuDo98, Ha10, KhKaAg00, KhAzAsWa09, KhYoAs08, KhYo08, KlRaGu96,NaFaAs09}
\begin{eqnarray*}
PI (G) &=& \sum_{e \in E} m_u (e) + m_v (e) \qquad \cite{Ha10,KhKaAg00,KhYoAs08}\\
PI_v (G) &=& \sum_{e \in E} n_u (e) + n_v (e) \qquad \cite{DaGu10,Il10,KhYo08,MoMaAs10,NaFaAs09}\\
Sz (G) &=& \sum_{e \in E} n_u (e) \cdot n_v (e) \qquad \cite{DaGu09,GuDo98,IlKlMi10,KlRaGu96}\\
Sz_e (G) &=& \sum_{e \in E} m_u (e) \cdot m_v (e) \qquad \cite{ChWu09,FaArMoAs10,KhAzAsWa09}.
\end{eqnarray*}

Notice that for trees $W (G) = Sz (G)$ and for bipartite graphs $PI_v (G) = nm$.

The paper is organized as follows. In Section 2 we present two improved inequalities on $PI_v$,
$PI$, $Sz$ and $Sz_e$ indices, and completely describe the extremal graphs. In Section 3 we prove
sharp lower bounds on the vertex PI and Szeged index involving Zagreb indices and correct the
equality cases for the upper bound involving the number of triangles. In Section~4 we correct the
equality case regarding PI and edge Szeged index and present new sharp bound using P\' olya--Szeg\"
o inequality.

%%%%%%%%%%%%%%%%%%%%%%%%%%%%%%%%%%%%%%%%%%%%%%%%%%%%%%%
\section{Improved inequalities for PI and Szeged indices}
\label{sec:2}
%%%%%%%%%%%%%%%%%%%%%%%%%%%%%%%%%%%%%%%%%%%%%%%%%%%%%%%

Let $X_n$ be the set of graphs on $n$ vertices, such that for every edge $e = uv \in E (G)$ it
holds $\min (n_v (e), n_u (e)) = 1$. It is obvious that the complete graph $K_n$ belongs to $X_n$,
and we will exclude $K_n$ in the sequel. By simple calculation, $PI_v (K_n) = 2 |E (K_n)| = n (n -
1)$ and $Sz (G) = |E (K_n)| = \frac{n(n-1)}{2}$.

A chordal graph is a simple graph such that each of its cycles of four or more vertices has a
chord, which is an edge joining two vertices that are not adjacent in the cycle.

In \cite{FaArMoAs10} the authors stated that a graph $G$ from $X_n$ must be a complete graph or a
chordal graph of diameter $2$. It follows that the set $X_n$ is composed of the graphs with
diameter $2$ such that there are no induced path $P_4$ or cycle $C_4$ in the graph $G$. This is the
full characterization of graphs from $X_n$.

Namely, let $G$ be a graph with no induced $P_4$ or $C_4$. It follows that diameter of $G$ is less
than or equal to two. Since $K_n$ is the unique graph with diameter one, we can assume that
diameter of $G$ is equal to 2. Consider an arbitrary edge $e = uv$. Since $G$ does not contain
induced $P_4$ or $C_4$, there are no two vertices $u'$ and $v'$ such that $u'$ is a neighbor of
$u$, $v'$ is a neighbor of $v$, $d (v, u') > 1$ and $d (u, v')
> 1$. Since $diam (G) = 2$, for the vertices $w$ that are not adjacent with $u$ or $v$, it
holds $d (v, w) = d (u, w) = 2$. Finally, either $n_v (e) = 1$ or $n_u (e) = 1$.

\begin{theorem}
\label{thm:pisz} Let $G$ be a connected graph with $n$ vertices and $m$ edges. Then
$$
PI_v (G) \leq Sz (G) + m,
$$
with equality if and only if $G \in X_n$.
\end{theorem}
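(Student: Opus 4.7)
The strategy is a direct edge-by-edge comparison. I rewrite the desired inequality as
\[
Sz(G) + m - PI_v(G) = \sum_{e = uv \in E} \bigl( n_u(e)\, n_v(e) + 1 - n_u(e) - n_v(e) \bigr),
\]
and observe that the summand factors as $(n_u(e) - 1)(n_v(e) - 1)$. So the whole theorem reduces to showing that each such factor is nonnegative, and then reading off when equality per edge forces $G \in X_n$.

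The nonnegativity is immediate once I note that for every edge $e = uv$ we have $n_u(e) \ge 1$ and $n_v(e) \ge 1$: the endpoint $u$ itself satisfies $d(u,u) = 0 < 1 = d(u,v)$, so $u$ is counted in $n_u(e)$, and symmetrically $v$ is counted in $n_v(e)$. Summing the nonnegative terms $(n_u(e)-1)(n_v(e)-1)$ over all edges yields $Sz(G) + m \ge PI_v(G)$.

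For the equality characterization I argue that equality in the summed inequality forces $(n_u(e)-1)(n_v(e)-1) = 0$ for every edge $e = uv$, i.e.\ $\min(n_u(e), n_v(e)) = 1$ for all $e \in E(G)$. This is precisely the defining condition of the family $X_n$, and conversely if $G \in X_n$ then each term vanishes so equality holds. I can then invoke the characterization of $X_n$ established just before the theorem (graphs with diameter at most $2$ containing no induced $P_4$ or $C_4$, including $K_n$) to describe the extremal graphs structurally.

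The only genuinely nontrivial ingredient is the observation $n_u(e), n_v(e) \ge 1$, but this is elementary. There is no real obstacle; the theorem is an algebraic identity plus a one-line bound, and the characterization of equality simply matches the already-stated defining property of $X_n$.
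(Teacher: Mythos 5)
Your proof is correct and follows essentially the same route as the paper: factoring the per-edge difference as $(n_u(e)-1)(n_v(e)-1)$, noting nonnegativity, summing over edges, and reading off the equality condition as the defining property of $X_n$. Your explicit justification that $n_u(e), n_v(e) \ge 1$ (each endpoint counts itself) is a small detail the paper leaves implicit, but the argument is the same.
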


\proof For an arbitrary edge $e = uv$, we have the following inequality
$$
n_v (e) + n_u (e) \leq n_v (e) \cdot n_u (e) + 1,
$$
which is equivalent with $(n_v (e) - 1)(n_u (e) - 1) \geq 0$. By adding similar inequalities for
all edges $e \in E (G)$, we get $PI_v (G) \leq Sz (G) + m$. The equality holds if and only if $n_v
(e) = 1$ or $n_u (e) = 1$ for all edges $e \in E (G)$, i.e. $G \in X_n$. \qed

\begin{remark}
The authors in \cite{FaArMoAs10} proved the inequality $PI_v (G) \leq 2 Sz (G)$. The inequality
from Theorem \ref{thm:pisz} is stronger, since $Sz (G) \geq m$.
\end{remark}

\begin{remark}
Let $G$ be an arbitrary graph from $X_n$. It can be observed that $G$ contains a vertex with degree
$n - 1$. Namely, consider a vertex $v$ with the maximum degree $k < n - 1$. Let $v_1, v_2, \ldots,
v_k$ be the neighbors of $v$, and assume that $u$ is not adjacent to $v$. Since the diameter of $G$
is equal to $2$, some of the neighbors of $v$ are adjacent to $u$ -- and let one such vertex be
$v_i$. In this case, for $e = vv_i$ it follows $1 = d (v_i, u) < d (v, u) = 2$, and $n_{v_i} (e)
\geq 2$. Therefore, $v_i$ must be adjacent to all vertices $v_1, v_2, \ldots, v_p$, which implies
that $deg (v_i) > deg (v) = k$. This is impossible, and it follows that $v$ is adjacent to all
vertices from $G$.
\end{remark}

Let $Y_n$ be the set of graphs on $n$ vertices, such that for every edge $e = uv \in E (G)$ it
holds $\min (m_v (e), m_u (e)) = 1$. It is obvious that no graph from $Y_n$ contains pendent vertex
(for a pendent vertex $v$ with the only neighbor $u$ it holds $m_v (uv) = 0$). Therefore, the
minimum vertex degree of graphs from $Y_n$ is greater than or equal to $2$. If all vertices have
degree $2$, then $G \cong C_n$ and it can be easily seen that only $C_3$ and $C_4$ belong to $Y_n$.

The vertex $v$ is called branching if $deg (v) > 2$. Let $G$ be an arbitrary graph from $Y_n$ and
let $v$ be an arbitrary branching vertex. For the edge $vu$ we have $m_v (e) \geq 2$, and it
follows that $m_u (e) = 1$. Therefore, all neighbors of the branching vertices have degree two. If
$G$ contains exactly one branching vertex $v$, then $G$ is composed of the union of cycles $C_3$
and $C_4$ having the vertex $v$ in common.

Now assume that $G$ contains at least two branching vertices. Let $P_d = w_0 w_1 \ldots w_d$ be the
shortest path connecting vertices $v$ and $u$, such that $w_0 = v$, $w_d = u$, $deg (v) \geq 3$ and
$deg (u) \geq 3$. If $d > 2$, for the edge $e = vw_1$ we have $m_v (e) \geq 2$ and $m_{w_1} \geq 2$
(since the edge $w_2 w_3$ is closer to $u$ than to $v$). Therefore, the distance between any two
branching vertices is equal to two. If $G$ contains at least three branching vertices, the
contradiction follows by considering the edge $e = vu'$ on Fig. 1 (red edges are closer to $u'$
than to $v$). Finally, in this case $G$ contains exactly two branching vertices connected by paths
of length two.

The set $Y_{12}$ is presented on Fig. 2.

\begin{figure}[h]
  \center
  \includegraphics [width = 4cm]{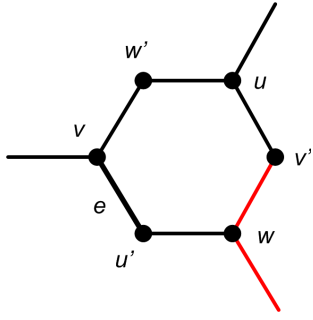}
  \caption { \textit{ Contradiction with three branching vertices. } }
\end{figure}

It can be easily proved by mathematical induction that the number of graphs in $Y_n$ is equal to
$$
|Y_n| = \left\{
\begin{array}{ll}
0 & n = 1, 2 \\
1 & n = 3, 4 \\
\lfloor \frac{n - 1}{6} \rfloor + 1, & n \equiv 2 \pmod 6, \\
\lfloor \frac{n - 1}{6} \rfloor + 2, & otherwise.
\end{array}
\right.
$$

\begin{figure}[ht]
  \center
  \includegraphics [width = 11cm]{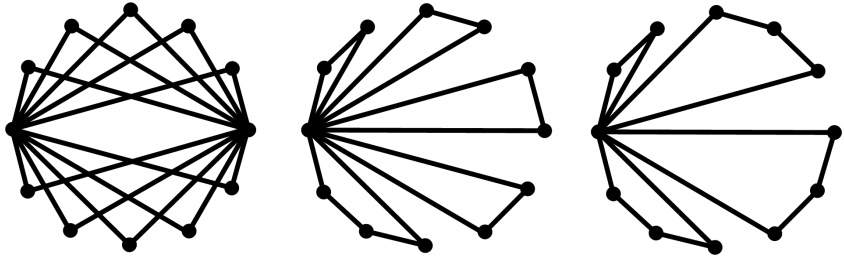}
  \caption { \textit{ The set $Y_{12}$. } }
\end{figure}

\begin{remark}
The authors in \cite{FaArMoAs10} wrongly stated in Theorem 2 that if $\min (m_u (e), m_v (e)) = 1$
then $G$ is a cycle of length $\leq 4$.
\end{remark}

Let $\delta (G)$ denote the minimal vertex degree in the graph $G$.

\begin{theorem}
Let $G$ be a connected graph with $n$ vertices, $m$ edges and $\delta (G) \geq 2$. Then
$$
PI (G) \leq Sz_e (G) + m,
$$
with equality if and only if $G \in Y_n$.
\end{theorem}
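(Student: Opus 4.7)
The plan is to mirror the proof of Theorem \ref{thm:pisz}, replacing vertex counts $n_u(e), n_v(e)$ with edge counts $m_u(e), m_v(e)$, but with one extra preliminary step: I must verify that $m_u(e), m_v(e) \geq 1$ for every edge $e = uv$, because the elementary inequality $a+b \leq ab+1$ is equivalent to $(a-1)(b-1) \geq 0$ and so can fail if one of $a, b$ is $0$. This is precisely where the hypothesis $\delta(G) \geq 2$ is used.

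First I would show: for any edge $e = uv$, since $\deg(u) \geq 2$, the vertex $u$ has a neighbor $w \neq v$. Then the edge $uw$ satisfies $d'(u, uw) = 0$ while $d'(v, uw) = \min(d(v,u), d(v,w)) \geq 1$ (because $w \neq v$), so $uw$ contributes to $m_u(e)$. Hence $m_u(e) \geq \deg(u) - 1 \geq 1$, and symmetrically $m_v(e) \geq 1$.

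Next I would apply the elementary inequality $m_u(e) + m_v(e) \leq m_u(e) \cdot m_v(e) + 1$, which is equivalent to $(m_u(e)-1)(m_v(e)-1) \geq 0$ and therefore holds by the previous step. Summing over all edges $e \in E(G)$ gives
$$
PI(G) = \sum_{e \in E} \bigl(m_u(e) + m_v(e)\bigr) \leq \sum_{e \in E} m_u(e) \cdot m_v(e) + m = Sz_e(G) + m.
$$

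For the equality case, equality in the summed inequality forces $(m_u(e)-1)(m_v(e)-1) = 0$ for every edge $e = uv$, i.e. $\min(m_u(e), m_v(e)) = 1$, which is exactly the definition of $G \in Y_n$. Conversely, any $G \in Y_n$ automatically satisfies $\delta(G) \geq 2$ (as noted in the discussion preceding the theorem) and achieves equality term by term. The only subtlety is really the first step; the rest is bookkeeping parallel to Theorem \ref{thm:pisz}, and the characterization of $Y_n$ itself has already been worked out in the text preceding the statement, so I need not reprove it.
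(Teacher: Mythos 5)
Your proof is correct and follows essentially the same route as the paper, which simply says the argument is analogous to Theorem \ref{thm:pisz} via the inequality $m_u(e)+m_v(e)\leq m_u(e)\,m_v(e)+1$. You additionally make explicit the one point the paper leaves implicit, namely that $\delta(G)\geq 2$ guarantees $m_u(e),m_v(e)\geq 1$ so that $(m_u(e)-1)(m_v(e)-1)\geq 0$ actually holds; this is a worthwhile clarification rather than a deviation in approach.
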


\proof The proof is similar to those of Theorem \ref{thm:pisz}. The equality holds if and only if
$m_v (e) = 1$ or $m_u (e) = 1$ holds for all edges $e \in E (G)$, or equivalently $G \in Y_n$. \qed

%%%%%%%%%%%%%%%%%%%%%%%%%%%%%%%%%%%%%%%%%%%%%%%%%%%%%%%
\section{Sharp bounds involving Zagreb indices and number of triangles}
\label{sec:3}
%%%%%%%%%%%%%%%%%%%%%%%%%%%%%%%%%%%%%%%%%%%%%%%%%%%%%%%

One of the oldest graph invariants are the first and the second Zagreb indices
\cite{GuPo86,IlSt10}, defined as follows
\begin{eqnarray*}
M_1(G) &=& \sum_{u \in V(G)} deg(v)^2 \\
M_2(G) &=& \sum_{uv \in E(G)} deg(u) deg(v).
\end{eqnarray*}

Let $t (G)$ denote the number of triangles $K_3$ in the graph $G$.

\begin{proposition}
\label{prp:PI-diameter} Let $G$ be a connected graph with diameter $2$. Then,
$$
PI_v (G) = M_1 (G) - 6 t(G).
$$
\end{proposition}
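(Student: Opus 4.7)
The plan is to compute $n_u(e)+n_v(e)$ edge by edge using the diameter-$2$ hypothesis, then sum over all edges.

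Fix an edge $e=uv\in E(G)$. Because $\mathrm{diam}(G)=2$, every vertex $w\neq u,v$ satisfies $d(w,u),d(w,v)\in\{1,2\}$, so $w$ falls into one of four classes according to the pair $(d(w,u),d(w,v))$: namely $(1,1)$, $(1,2)$, $(2,1)$, or $(2,2)$. The vertex $u$ itself contributes to $n_u(e)$ and $v$ contributes to $n_v(e)$. Among the other vertices, the classes $(1,1)$ and $(2,2)$ are equidistant from $u$ and $v$ and contribute to neither count, while $(1,2)$ contributes to $n_u(e)$ and $(2,1)$ contributes to $n_v(e)$.

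Next, I would identify these classes with neighborhood data. A vertex $w\neq v$ lies in the $(1,2)$-class iff $w\in N(u)\setminus(N(v)\cup\{v\})$, so this class has size $\deg(u)-1-|N(u)\cap N(v)|$; symmetrically for $(2,1)$. Letting $\tau(e)=|N(u)\cap N(v)|$ denote the number of triangles of $G$ containing the edge $e$, adding the two counts together with the contributions of $u$ and $v$ gives the edge-local identity
\begin{equation*}
n_u(e)+n_v(e) \;=\; \deg(u)+\deg(v)-2\tau(e).
\end{equation*}
Note the $(2,2)$-class automatically drops out of the count, which is exactly where the diameter-$2$ assumption is indispensable (otherwise there could be vertices at distance $\geq 3$ contributing to $n_u$ or $n_v$).

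Finally, I would sum the identity over all $e=uv\in E(G)$. On one hand, $\sum_{uv\in E}(\deg(u)+\deg(v))=\sum_{v\in V}\deg(v)^2=M_1(G)$ by the standard handshake-type rearrangement. On the other hand, each triangle of $G$ is counted once by $\tau(e)$ for each of its three edges, so $\sum_{e\in E}\tau(e)=3t(G)$. Combining,
\begin{equation*}
PI_v(G)=\sum_{e\in E}\bigl(n_u(e)+n_v(e)\bigr)=M_1(G)-6t(G),
\end{equation*}
as claimed. The step requiring the most care is the bookkeeping of the four distance classes and verifying that both the $(1,1)$ triangles and the $(2,2)$ vertices contribute zero; everything else is routine counting.
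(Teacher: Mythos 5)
Your proof is correct and follows essentially the same route as the paper: both establish the edge-local identity $n_u(e)+n_v(e)=\deg(u)+\deg(v)-2t(e)$ from the diameter-$2$ hypothesis and then sum, using $\sum_{uv\in E}(\deg(u)+\deg(v))=M_1(G)$ and the fact that each triangle is counted once per each of its three edges. Your version merely spells out the four distance classes more explicitly than the paper does.
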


\proof Let $e = uv$ be an arbitrary edge, such that it belongs to exactly $t (e)$ triangles.
Adjacent vertices of $v$ that are not neighbors of $u$ are closer to $v$ than $u$, and vice versa.
For the vertices $w$ that are not neighbors of $u$ or $v$, it holds $d (v, w) = d (u, w) = 2$.
Therefore,
\begin{eqnarray*}
PI_v (G) &=& \sum_{e \in E (G)} n_v (e) + n_u (e) \\
&=& \sum_{e \in E (G)} deg (v) + deg (u) - 2 t (e) \\
&=& \sum_{v \in V (G)} deg^2 (v) - 2 \sum_{e \in E (G)} t (e) \\
&=& M_1 (G) - 6 t (G),
\end{eqnarray*}
since we counted each triangle three times. This completes the proof. \qed

\begin{remark}
In \cite{MoMaAs10}, the authors stated that
$$
n_u (e) + n_v (e) \geq deg (u) + deg (v) - t,
$$
where $t$ is a number such that every edge of $G$ lie in exactly $t$ triangles of $G$. Similarly as
in Proposition \ref{prp:PI-diameter}, this should be corrected to
$$
n_u (e) + n_v (e) \geq deg (u) + deg (v) - 2 t (e).
$$
After summing over all edges of $G$, we get
$$
P_v (G) \geq M_1 (G) - 6 t (G).
$$
\end{remark}

Similarly, we have the following

\begin{proposition}
\label{prp:Sz-diameter} Let $G$ be a connected graph with diameter $2$, such that every edge
belongs to exactly $t$ triangles. Then,
$$
Sz (G) = M_2 (G) - t \cdot M_1 (G) + m \cdot t^2.
$$
\end{proposition}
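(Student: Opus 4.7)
The plan is to mimic the argument from Proposition \ref{prp:PI-diameter} exactly, but replace the sum $n_v(e)+n_u(e)$ with the product $n_v(e)\cdot n_u(e)$. The key observation is that because $\mathrm{diam}(G)=2$, for an arbitrary edge $e=uv$ every vertex $w\notin\{u,v\}$ is either (i) a neighbor of $v$ but not of $u$, contributing to $n_v(e)$; (ii) a neighbor of $u$ but not of $v$, contributing to $n_u(e)$; (iii) a common neighbor of $u$ and $v$, in which case $d(u,w)=d(v,w)=1$ so $w$ contributes to neither side; or (iv) not adjacent to either endpoint, in which case $d(u,w)=d(v,w)=2$ and again $w$ contributes to neither side. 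Counting $u$ and $v$ themselves, this gives
$$
n_v(e)=\deg(v)-t(e), \qquad n_u(e)=\deg(u)-t(e),
$$
where $t(e)$ is the number of triangles containing $e$.

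Under the hypothesis that $t(e)=t$ for every edge $e$, I would then expand the product
$$
n_v(e)\cdot n_u(e)=\bigl(\deg(v)-t\bigr)\bigl(\deg(u)-t\bigr)=\deg(u)\deg(v)-t\bigl(\deg(u)+\deg(v)\bigr)+t^2,
$$
and sum over all edges. The first sum is, by definition, $M_2(G)$; the final sum contributes $m\cdot t^2$; and the middle sum is handled by the standard identity
$$
\sum_{uv\in E(G)}\bigl(\deg(u)+\deg(v)\bigr)=\sum_{v\in V(G)}\deg(v)^2=M_1(G),
$$
obtained by noting that each vertex $v$ appears in exactly $\deg(v)$ edge-terms, each time contributing $\deg(v)$. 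Combining these three evaluations yields the claimed formula $Sz(G)=M_2(G)-t\cdot M_1(G)+m\cdot t^2$.

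There is no real obstacle here; the only point that requires care is verifying that common neighbors of $u$ and $v$ (i.e.\ vertices of the $t$ triangles through $e$) are indeed equidistant and therefore excluded from both $n_u(e)$ and $n_v(e)$, rather than being counted on one side. Once the local count $n_v(e)=\deg(v)-t$ is established, the rest is a routine expansion and reindexing of sums, and the global assumption that every edge lies in exactly $t$ triangles is what allows $t$ to be pulled outside the summations.
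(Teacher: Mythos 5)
Your proposal is correct and is exactly the argument the paper intends: the paper gives no explicit proof of Proposition \ref{prp:Sz-diameter}, introducing it only with ``Similarly, we have the following'' after Proposition \ref{prp:PI-diameter}, and your derivation of $n_v(e)=\deg(v)-t$, $n_u(e)=\deg(u)-t$ from the diameter-$2$ hypothesis followed by expansion of the product is that same argument carried out in full. No issues.
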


A graph $G = SRG (v, k, \lambda, \mu)$ is strongly regular if $G$ is $k$-regular $v$-vertex graph
such that every two adjacent vertices have $\lambda$ common neighbors, while every two non-adjacent
vertices have $\mu$ common neighbors. A simple corollary of Proposition \ref{prp:PI-diameter} and
Proposition \ref{prp:Sz-diameter} is~\cite{FaArMoAs10}
\begin{eqnarray*}
PI_v (SRG (v, k, \lambda, \mu)) &=& v k^2 - k v \lambda \\
Sz(SRG (v, k, \lambda, \mu)) &=& m k^2 - 2mk\lambda + m \lambda^2.
\end{eqnarray*}

The following upper bound on the vertex PI index was proved in \cite{DaGu10}.
\begin{theorem}
\label{thm:PI-triangle} Let $G$ be a connected graph with $n$ vertices and $m$ edges. Then,
$$
PI_v (G) \leq nm - 3 t (G).
$$
\end{theorem}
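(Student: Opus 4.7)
The plan is to bound $n_u(e) + n_v(e)$ edge-by-edge using the vertices that are equidistant from the two endpoints, and then to show that every triangle through $e$ forces at least one such equidistant vertex.

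Fix an edge $e = uv$ and partition the vertex set $V(G)$ into three classes: those strictly closer to $u$ (contributing to $n_u(e)$), those strictly closer to $v$ (contributing to $n_v(e)$), and those at equal distance from $u$ and $v$. Writing $n_0(e)$ for the size of the last class, we immediately get the identity $n_u(e) + n_v(e) + n_0(e) = n$, so
$$
n_u(e) + n_v(e) = n - n_0(e).
$$

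Now comes the key observation, which is also the only non-trivial step. If $w$ is a common neighbor of $u$ and $v$, i.e.\ if $uvw$ is a triangle of $G$, then $d(u,w) = d(v,w) = 1$, so $w$ belongs to the equidistant class. Hence if $t(e)$ denotes the number of triangles of $G$ containing the edge $e$, we obtain $n_0(e) \geq t(e)$, and consequently
$$
n_u(e) + n_v(e) \leq n - t(e).
$$

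Summing over all $e \in E(G)$ and using the fact that each triangle of $G$ contributes to $t(e)$ for exactly three edges, we conclude
$$
PI_v(G) = \sum_{e \in E(G)} \bigl(n_u(e) + n_v(e)\bigr) \leq nm - \sum_{e \in E(G)} t(e) = nm - 3t(G),
$$
as desired. The only subtle point is the triangle/equidistance link; the rest is a partition identity and double counting. No diameter hypothesis is needed, since common neighbors of $u$ and $v$ always lie at distance one from each, regardless of the global diameter of $G$.
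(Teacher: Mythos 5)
Your proof is correct, and it matches the argument the paper relies on: although the paper only cites this theorem from Das and Gutman without reproducing a proof, its subsequent discussion of the equality case hinges on exactly your edge-wise inequality $n_u(e) + n_v(e) \leq n - t(e)$, obtained by noting that common neighbors of $u$ and $v$ are equidistant from both. The partition identity and the factor of $3$ from double-counting triangles are handled correctly, and you are right that no diameter assumption is needed.
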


The authors moreover claimed, that the equality holds if an only if $G$ is a bipartite graph or $G
\cong K_3$. But this is not true. Namely, the equality holds if and only if for every edge $e = uv$
from $G$ holds $n_u (e) + n_v (e) = n - t (e)$. If $t (G) = 0$, then $PI_v (G) \leq nm$ if and only
if $G$ is a bipartite graph. Otherwise, the extremal graph must contain a triangle. For the
complete graph it holds
$$
n (n - 1) = PI_v (K_n) = n \cdot \binom{n}{2} - 3 \binom{n}{3}.
$$

We checked all graphs on $3 \leq n \leq 10$ vertices with the help of Nauty~\cite{Nauty}, and the
computational results are presented in Table 1, while the extremal graphs with $n = 4, 5, 6$
vertices are presented on Fig. 3. It is interesting that the diameter of all extremal graphs
(except $K_n$) is two.

\begin{table}[ht]
\centering
\begin{tabular} {l l l l l l l l l}
\toprule
$\mathbf{n}$ & 3 & 4 & 5 & 6 & 7 & 8 & 9 & 10 \\
\midrule
\textbf{count} & 1 & 2 & 4 & 7 & 11 & 17 & 25 & 36 \\
\bottomrule
\end{tabular}

\caption{The number of extremal non-bipartite small graphs.}
\end{table}

\begin{figure}[ht]
  \center
  \includegraphics [width = 15cm]{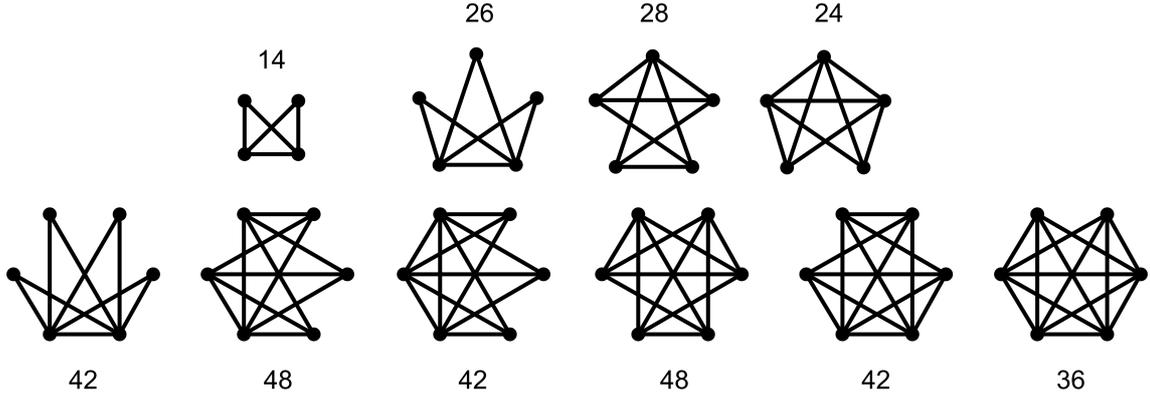}
  \caption { \textit{ Non-bipartite extremal graphs for $n = 4, 5, 6$ with $PI_v$ values. } }
\end{figure}

In addition, every extremal graph does not contain induced graph $C_3'$, composed of a triangle
with a pendent edge attached to one vertex of a triangle (otherwise the equality $n_u (e) + n_v (e)
= n - t (e)$ does not hold).

A graph is odd-hole-free if it has no induced subgraph that is a cycle of odd length greater than
3. Note that the equality holds in Theorem \ref{thm:PI-triangle} for odd-hole-free graphs.

A graph $G$ is a complete $k$-partite graph if there is a partition $V_1 \cup V_2 \cup \ldots \cup
V_k = V (G)$ of the vertex set, such that $uv \in E(G)$ iff $u$ and $v$ are in different parts of
the partition. If $|V_i| = n_i$, then $G$ is denoted by $K_{n_1,n_2,\ldots,n_k}$. It can be easily
proved that the equality also holds in Theorem \ref{thm:PI-triangle} for complete $k$-partite
graphs $K_{n_1,n_2,\ldots,n_k}$. Namely, consider an arbitrary edge $e = v_i v_j$ that connects
parts $V_i$ and $V_j$. The number of triangles that contain the edge $e$ is $n - n_i - n_j$, while
$n_{v_i} (e) = n_j$ and $n_{v_j} (e) = n_i$, and the relation $n_{v_i} (e) + n_{v_j} (e) = n - t
(e)$ holds.

For the completeness, we state the similar result for the Szeged index \cite{DaGu09}.

\begin{theorem}
Let $G$ be a connected graph with $n$ vertices, $m$ edges and $t (G)$ triangles. Then,
\begin{equation}
\label{eq:Sz-triangle} Sz (G) \leq \frac{1}{4} n^2m - 3 t (G).
\end{equation}
If equality holds in \eqref{eq:Sz-triangle}, then $G$ is bipartite (so in particular $t(G) = 0$),
regular, $n$ is even and the minimum vertex degree is greater than $1$.
\end{theorem}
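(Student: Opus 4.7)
The plan is to combine two classical edge-wise estimates: the bound on $n_u(e)+n_v(e)$ coming from common neighbors (as already exploited in Proposition~\ref{prp:PI-diameter}) with AM--GM applied to $n_u(e)\cdot n_v(e)$. Concretely, for any edge $e=uv$, the $t(e)$ common neighbors of $u$ and $v$ lie at distance one from both endpoints, hence contribute to neither $n_u(e)$ nor $n_v(e)$, giving $n_u(e)+n_v(e)\le n-t(e)$. AM--GM then yields
$$
n_u(e)\cdot n_v(e)\;\le\;\Bigl(\tfrac{n-t(e)}{2}\Bigr)^{\!2}.
$$

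Summing this over all edges and expanding gives
$$
Sz(G)\;\le\;\tfrac14\sum_{e\in E}(n-t(e))^2 \;=\;\tfrac{n^2m}{4}-\tfrac{n}{2}\sum_{e\in E}t(e)+\tfrac14\sum_{e\in E}t(e)^2 .
$$
Now I would use the identity $\sum_e t(e)=3t(G)$ (each triangle is counted by its three edges) together with the trivial estimate $t(e)\le n-2$, which gives $\sum_e t(e)^2\le(n-2)\sum_e t(e)=3(n-2)t(G)$. Substituting produces
$$
Sz(G)\;\le\;\tfrac{n^2m}{4}-\tfrac{3(n+2)}{4}\,t(G),
$$
and since $\tfrac{3(n+2)}{4}\ge 3$ for $n\ge 2$, the claimed inequality \eqref{eq:Sz-triangle} follows.

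For the equality discussion, I would track when each estimate above is tight. Since $\tfrac{3(n+2)}{4}>3$ strictly for $n\ge 3$, equality in \eqref{eq:Sz-triangle} forces $t(G)=0$ (so in particular $t(e)=0$ on every edge). The edge-wise bound then collapses to $n_u(e)+n_v(e)=n$ for all $e\in E$, which is the well-known characterization of bipartite graphs (any odd cycle provides an equidistant vertex witnessing a strict inequality). Equality in AM--GM additionally forces $n_u(e)=n_v(e)=n/2$ for every edge, so $n$ must be even. Finally, a pendant vertex $v$ with unique neighbor $u$ would give $n_v(uv)=1$ while $n_u(uv)=n-1$, contradicting $n_u(e)=n_v(e)$ unless $n=2$; hence $\delta(G)\ge 2$.

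The one delicate point is the regularity conclusion. Here the hypothesis to exploit is that $G$ is a bipartite graph in which every edge is distance-balanced: $n_u(e)=n_v(e)=n/2$. I would attack this by a double-counting argument: for a fixed vertex $u$,
$$
\sum_{v\sim u}n_u(uv)\;=\;\tfrac{n\deg(u)}{2},
$$
while the left-hand side can also be written as $\sum_{w\in V}\bigl|\{v\sim u : d(w,v)>d(w,u)\}\bigr|$. Using the bipartite structure, for any $w$ each neighbor $v$ of $u$ satisfies $d(w,v)=d(w,u)\pm 1$, and the number of ``$+1$-neighbors'' can be related to the distance-layer counts around $w$. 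Comparing the resulting expression for two adjacent vertices $u,u'$ (connected through a common neighbor using that $G$ is bipartite of even order) forces $\deg(u)=\deg(u')$, and connectivity then propagates equality to all of $V$. This regularity step is the most technical part of the argument; the remainder is essentially the two clean edge-wise inequalities followed by summation.
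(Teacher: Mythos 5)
Your derivation of the inequality itself is correct and complete: the estimate $n_u(e)+n_v(e)\le n-t(e)$ (common neighbours are equidistant), AM--GM, the identity $\sum_e t(e)=3t(G)$ and the crude bound $t(e)\le n-2$ together give $Sz(G)\le \frac{n^2m}{4}-\frac{3(n+2)}{4}t(G)\le \frac{n^2m}{4}-3t(G)$. Note that the paper does not actually prove this theorem --- it only states it with a citation to \cite{DaGu09} --- so your argument stands as an independent derivation, and it is a perfectly reasonable one. Your equality analysis for bipartiteness (via a \emph{shortest} odd cycle, which is what you need to guarantee the equidistant vertex is witnessed by graph distances), for $n$ even, and for $\delta(G)\ge 2$ is also sound, modulo the observation that $K_2$ itself attains equality with minimum degree $1$, which you correctly flag as the $n=2$ exception.

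The genuine gap is the regularity claim, and it is not a gap you can close. Your equality conditions reduce exactly to: $G$ is connected, bipartite, and $n_u(e)=n_v(e)=n/2$ for every edge, i.e.\ $G$ is a connected bipartite distance-balanced graph (this is precisely the characterization of $Sz(G)=\frac{1}{4}n^2m$ quoted in the paper from \cite{IlKlMi10}). But connected bipartite distance-balanced graphs need \emph{not} be regular: Handa exhibited a non-regular bipartite distance-balanced graph on $24$ vertices (see the discussion in \cite{JeKl08,KuMa06}), and such a graph attains equality in \eqref{eq:Sz-triangle} without being regular. Consequently your double-counting identity $\sum_{v\sim u}n_u(uv)=\frac{n\deg(u)}{2}$, while correct, cannot be pushed to $\deg(u)=\deg(u')$ for adjacent $u,u'$; the proposed comparison of layer counts around two adjacent vertices does not force equal degrees, and no argument can, since the implication is false. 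You should either drop ``regular'' from what you claim to prove (everything else in your write-up survives), or make explicit that this necessary condition, as stated in the theorem you are reproducing, cannot be derived from the equality conditions alone.
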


A graph $G$ is distance-balanced if $|n_v (e)| = |n_u (e)|$ holds for any edge $e = uv$ of $G$ (see
\cite{JeKl08,KuMa06}). In \cite{IlKlMi10} it was proven that a connected bipartite graph $G$ is
distance-balanced if and only if $Sz (G) = \frac{1}{4}n^2 m$. Recently in \cite{ChWu09} the authors
presented a simple proof of the conjecture from \cite{KlRaGu96} that the complete balanced
bipartite graph $K_{\lfloor n/2 \rfloor, \lceil n/2 \rceil}$ has maximum Szeged index among all
connected graphs with $n$ vertices.

\begin{theorem}
Let $G$ be a connected triangle-free graph with $n \geq 3$ vertices. Then,
$$
\label{eq:sz-m2} Sz (G) \geq M_2 (G),
$$
with equality if and only if $G$ has diameter $2$.
\end{theorem}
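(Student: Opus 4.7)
The plan is to prove the bound edge-by-edge, showing that for every edge $e=uv$ of a triangle-free graph one has $n_u(e)\,n_v(e)\ge \deg(u)\deg(v)$, and then characterize when all these local inequalities are tight.

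First I would exploit triangle-freeness as follows. Fix an edge $e=uv$. Because $G$ has no triangle, no neighbor of $u$ (other than $v$) can be adjacent to $v$, so every such neighbor $w$ satisfies $d(w,u)=1<d(w,v)$ and hence contributes to $n_u(e)$. Counting $u$ itself (which is trivially closer to $u$) together with these $\deg(u)-1$ neighbors yields $n_u(e)\ge \deg(u)$, and by symmetry $n_v(e)\ge \deg(v)$. Multiplying and summing over $e\in E(G)$ gives the desired $Sz(G)\ge M_2(G)$.

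For the equality characterization, the easy direction is $\mathrm{diam}(G)=2\Rightarrow$ equality: if the diameter is $2$, then any vertex $w\notin N(u)\cup N(v)\cup\{u,v\}$ has $d(w,u)=d(w,v)=2$, so it contributes to neither $n_u(e)$ nor $n_v(e)$; combined with the triangle-free argument above this forces $n_u(e)=\deg(u)$ and $n_v(e)=\deg(v)$ for every edge, and summation gives $Sz(G)=M_2(G)$. For the converse, I would argue by contrapositive: assume $\mathrm{diam}(G)\ge 3$ and exhibit an edge where the local inequality is strict. Take vertices $x,y$ with $d(x,y)\ge 3$ and a shortest $x$–$y$ path $x=x_0x_1x_2\cdots x_k=y$ with $k\ge 3$. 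For the edge $e=x_0x_1$, the endpoint $y=x_k$ is at distance $k$ from $x_0$ but at distance $k-1$ from $x_1$, so $y$ contributes to $n_{x_1}(e)$; since $k\ge 3$, $y$ is not a neighbor of $x_1$ and was not among the vertices counted in the lower bound $\deg(x_1)$. Hence $n_{x_1}(e)\ge \deg(x_1)+1$, which makes the contribution of $e$ strictly greater than $\deg(x_0)\deg(x_1)$, forcing $Sz(G)>M_2(G)$.

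The main obstacle is the converse direction of the equality statement: one has to be careful that the extra contributor identified (the vertex $y$) is genuinely new — i.e., not already counted among $\{x_1\}\cup(N(x_1)\setminus\{x_0\})$ — and this is where the assumption $k\ge 3$ is used. The triangle-free hypothesis is needed only for the basic lower bound $n_u(e)\ge \deg(u)$; once that is in hand, the rest of the argument is purely about distances in $G$. A minor sanity check is that triangle-free plus $n\ge 3$ already excludes diameter $1$, so the dichotomy "diameter $2$" versus "diameter $\ge 3$" is exhaustive.
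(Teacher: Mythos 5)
Your proof is correct and follows essentially the same route as the paper: the pointwise bound $n_u(e)\ge\deg(u)$ from triangle-freeness, and for the converse a geodesic of length $\ge 3$ whose far endpoint witnesses $n_{x_1}(e)>\deg(x_1)$ (the paper uses an induced $P_4$ in exactly the same role). You actually spell out the counting behind $n_u(e)\,n_v(e)\ge\deg(u)\deg(v)$ more explicitly than the paper does, but there is no substantive difference.
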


\proof Let $e = uv$ be an arbitrary edge of $G$. Since $G$ is a triangle-free graph, it follows
$$
n_v (e) \cdot n_u (e) \geq deg (v) \cdot deg (u).
$$
Therefore,
$$
Sz (G) = \sum_{e \in E (G)} n_v (e) \cdot n_u (e) \geq \sum_{e \in E (G)} deg (v) \cdot deg (u) =
M_2 (G).
$$
Let $G$ be a triangle-tree graph such that $Sz (G) = M_2 (G)$. Since $G \not \cong K_n$, the
diameter of $G$ is greater than or equal to $2$. On the other side, if diameter is greater than
$2$, we can consider induced path $P_4 = w_0 w_1 w_2 w_3$ and for the edge $e = w_0 w_1$ it follows
that $n_{w_1} (e) > deg (w_1)$ since $d (w_1, w_3) = 2 < 3 = d (w_0, w_3)$. Therefore, $diam (G) =
2$. In this case, for all vertices $w$ that are on distance greater than one from the vertices $v$
and $u$ holds $d (v, w) = d (u, w) = 2$ and finally $n_v (e) \cdot n_u (e) = deg (v) \cdot deg
(u)$. This completes the proof. \qed

%%%%%%%%%%%%%%%%%%%%%%%%%%%%%%%%%%%%%%%%%%%%%%%%%%%%%%%
\section{Further relations}
\label{sec:4}
%%%%%%%%%%%%%%%%%%%%%%%%%%%%%%%%%%%%%%%%%%%%%%%%%%%%%%%

\begin{theorem}
Let $G$ be a connected graph with $n$ vertices and $m$ edges. Then,
$$
PI (G) \geq \frac{4}{m - 1} SZ_e (G),
$$
with equality if and only if $m$ is odd and $G \cong C_n$.
\end{theorem}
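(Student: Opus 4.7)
The plan is to establish the bound edge-by-edge. For each edge $e = uv$ I would first note that the $m-1$ edges of $G$ other than $e$ split into three disjoint classes (strictly closer to $u$, strictly closer to $v$, or equidistant), so $m_u(e) + m_v(e) \leq m - 1$. Then I would combine this with the elementary AM--GM inequality $ab \leq (a+b)^2/4$, applied to $a = m_u(e)$, $b = m_v(e)$, to obtain
\[
m_u(e) \, m_v(e) \;\leq\; \frac{\bigl(m_u(e) + m_v(e)\bigr)^2}{4} \;\leq\; \frac{(m-1)\bigl(m_u(e) + m_v(e)\bigr)}{4}.
\]
Summing this over all $e \in E(G)$ would give $Sz_e(G) \leq \frac{m-1}{4}\, PI(G)$, which rearranges to the desired inequality.

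For the equality analysis, both inequalities in the above chain must be tight on every edge. Tightness in AM--GM forces $m_u(e) = m_v(e)$, and tightness in the second step forces $m_u(e) + m_v(e) = m-1$, i.e., no edge $f \neq e$ is equidistant from $u$ and $v$. Combining these, $m_u(e) = m_v(e) = (m-1)/2$ for every edge, which forces $m$ to be odd. For the odd cycle $C_n$ with $n = m$, a direct count shows that each edge $e = uv$ has exactly $(n-1)/2$ other edges strictly closer to $u$ and the same number strictly closer to $v$, and no equidistant edge; hence $C_n$ achieves equality.

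The crux is to show that $C_n$ is the \emph{only} extremal graph, and this is where I expect the main obstacle. First I would rule out pendant vertices: a leaf $v$ with unique neighbor $u$ satisfies $m_v(uv) = 0$, so by the equality condition $m_u(uv) = 0$ as well, forcing $m = 1$. Next I would rule out induced $4$-cycles: in a $4$-cycle $w_1 w_2 w_3 w_4$ the two opposite edges are each equidistant from the endpoints of the other, violating the no-equidistant condition. Finally, assuming some vertex $v$ has degree at least $3$ with neighbors $u_1, u_2, u_3$, I would write $m_v(vu_1) = (d(v) - 1) + \alpha$ where $\alpha = (m+1)/2 - d(v)$ counts the non-incident edges closer to $v$, and similarly for $u_1$; using the absence of $4$-cycles to control the distances between the $u_i$ and their further neighbors, I would aim to exhibit a concrete non-incident edge that is equidistant from $v$ and some $u_i$, yielding a contradiction. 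The obstacle will be the bookkeeping required to turn the global balance $m_u(e) = m_v(e) = (m-1)/2$ into this local obstruction around a high-degree vertex. Once $2$-regularity is established, connectedness forces $G \cong C_n$ with $n = m$ odd, completing the characterization.
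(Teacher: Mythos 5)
Your derivation of the inequality itself matches the paper's: the same AM--GM step $4m_u(e)m_v(e)\le (m_u(e)+m_v(e))^2$ combined with $m_u(e)+m_v(e)\le m-1$, summed over edges, and the same reduction of the equality case to the condition $m_u(e)=m_v(e)=\tfrac{m-1}{2}$ for every edge, from which pendant vertices are excluded exactly as in the paper. The verification that odd cycles attain equality is also fine. However, there is a genuine gap in the part you yourself flag as the crux: you never actually prove that $C_n$ (with $n$ odd) is the \emph{only} extremal graph. Your plan --- rule out induced $4$-cycles, then do degree bookkeeping around a vertex of degree at least $3$ to ``aim to exhibit'' an equidistant edge --- is left as an intention with an admitted obstacle, and as written it is not clear it closes: ruling out \emph{induced} $4$-cycles does not by itself control the distances you need, and the quantity $\alpha=(m+1)/2-d(v)$ you introduce does not obviously lead to a concrete equidistant edge without substantially more work.

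The paper closes this gap with a much shorter structural argument that you may want to adopt. Since $G$ has no pendant vertices it is not a tree, so it contains a cycle; let $C=v_1v_2\ldots v_k$ be a \emph{shortest} cycle, so that distances along $C$ are graph distances. If $k$ is even, the edge of $C$ opposite to $e=v_iv_{i+1}$ is equidistant from $v_i$ and $v_{i+1}$, whence $m_{v_i}(e)+m_{v_{i+1}}(e)<m-1$, a contradiction. If $k$ is odd and $G\not\cong C_k$, connectivity yields a vertex $u$ off $C$ adjacent to some $v_1\in C$; then the edge $uv_1$ is at distance $\tfrac{k-1}{2}$ from both endpoints of the edge $e=v_{(k+1)/2}v_{(k+3)/2}$, so again $m_{v_{(k+1)/2}}(e)+m_{v_{(k+3)/2}}(e)<m-1$. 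Hence $G\cong C_n$ with $n=m$ odd. This avoids the degree-$3$ case analysis entirely, and is the missing piece of your argument.
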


\proof Let $e = uv$ be an an arbitrary edge of $G$. Using the arithmetic-geometric mean inequality,
we have $(m_u (e) + m_v (e))^2 \geq 4 m_u (e) m_v (e)$. Therefore,
\begin{eqnarray*}
(m - 1) PI (G) &=& \sum_{e \in E (G)} (m - 1)(m_u (e) + m_v (e)) \\
&\geq& \sum_{e \in E (G)} (m_u (e) + m_v (e))^2 \\
&\geq& \sum_{e \in E (G)} 4 m_u (e) m_v (e) = 4 Sz_e (G).
\end{eqnarray*}
The equality holds if and only if $m_u (e) = m_v (e) = \frac{m - 1}{2}$ for every $e \in E (G)$. It
follows that $G$ does not have pendent vertices, and therefore $G$ is not a tree. Finally, $G$ must
contain a cycle, and let $C = v_1 v_2 \ldots v_k$ be the shortest cycle contained in $G$. If $k$ is
even, by considering the opposite edges we simply get $m_{v_i} (e) + m_{v_{i+1}} (e) < m - 1$. If
$k$ is odd and $G \not \cong C_n$, there exist a vertex $u$ not belonging to $C$, and without loss
of generality suppose that $u$ is a neighbor of $v_1$. In this case $d (v_{(k+1)/2}, u_1 v) = d
(v_{(k+3)/2}, u_1 v) = \frac{k-1}{2}$, and for the edge $e = v_{(k+1)/2} v_{(k+3)/2}$ we get
$m_{v_{(k+1)/2}} (e) + m_{v_{(k+3)/2}} (e) < m - 1$. Therefore, $k = n$ is odd number and $G \cong
C_n$. \qed

\begin{remark}
In \cite{FaArMoAs10}, the authors stated in Theorem 4 (b) that $PI (G) \geq \frac{4}{m-1} SZ_e (G)$
with equality if and only if $m - 1$ is even and $G$ is a tree with an odd number of vertices or a
cycle of odd length.
\end{remark}

We will establish another relation between Szeged and vertex PI index, using the following P\'
olya--Szeg\" o inequality \cite{PoSz72}.

%\begin{theorem}
%\label{thm:polya} Let $a_1, a_2, \ldots, a_n$ and $b_1, b_2, \ldots, b_n$ be positive real numbers
%such that for $1 \leq i \leq n$ holds $a \leq a_i \leq A$ and $b \leq b_i \leq B$. Then,
%$$
%\left ( \sum_{i = 1}^n a_i^2 \right ) \cdot \left ( \sum_{i = 1}^n b_i^2 \right ) \leq \frac{1}{4}
%\left ( \sqrt{\frac{AB}{ab}} + \sqrt{\frac{ab}{AB}} \right)^2 \cdot \left ( \sum_{i = 1}^n a_i b_i
%\right )^2.
%$$
%The equality holds if and only if the numbers
%$$
%p = \frac{\frac{A}{a}}{\frac{A}{a} + \frac{B}{b}} \cdot n  \qquad \mbox{and} \qquad q =
%\frac{\frac{ B}{b}}{\frac{A}{a} + \frac{B}{b}} \cdot n
%$$
%are integers, $a_1 = a_2 = \ldots = a_p = a$, $a_{p + 1} = a_{p + 2} = \ldots = a_n = A$, $b_1 =
%b_2 = \ldots = b_p = B$ and $b_{p+1} = b_{q+2} = \ldots = b_n = b$.
%\end{theorem}

\begin{theorem}
\label{thm:polya} Let $a_1, a_2, \ldots, a_n$ and $b_1, b_2, \ldots, b_n$ be positive real numbers
such that for $1 \leq i \leq n$ holds $a \leq a_i \leq A$ and $b \leq b_i \leq B$, with $a < A$ and
$b < B$. Then,
$$
\left ( \sum_{i = 1}^n a_i^2 \right ) \cdot \left ( \sum_{i = 1}^n b_i^2 \right ) \leq \frac{1}{4}
\left ( \sqrt{\frac{AB}{ab}} + \sqrt{\frac{ab}{AB}} \right)^2 \cdot \left ( \sum_{i = 1}^n a_i b_i
\right )^2.
$$
The equality holds if and only if the numbers
$$
p = \frac{\frac{A}{a}}{\frac{A}{a} + \frac{B}{b}} \cdot n  \qquad \mbox{and} \qquad q =
\frac{\frac{ B}{b}}{\frac{A}{a} + \frac{B}{b}} \cdot n
$$
are integers, $a_1 = a_2 = \ldots = a_p = a$, $a_{p + 1} = a_{p + 2} = \ldots = a_n = A$, $b_1 =
b_2 = \ldots = b_p = B$ and $b_{q+1} = b_{q+2} = \ldots = b_n = b$.
\end{theorem}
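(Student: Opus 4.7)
The plan is to prove this classical inequality by a reverse-Cauchy--Schwarz argument based on two-sided bounds on the ratios $b_i/a_i$. The starting observation is that the hypotheses $a \leq a_i \leq A$ and $b \leq b_i \leq B$ force $b/A \leq b_i/a_i \leq B/a$, so
$$
\left(\frac{b_i}{a_i} - \frac{b}{A}\right)\left(\frac{B}{a} - \frac{b_i}{a_i}\right) \geq 0
$$
holds for every $i$. Expanding this pointwise inequality and clearing the denominator by multiplying through by $a_i^2 > 0$, one obtains
$$
b_i^2 + \frac{bB}{aA}\, a_i^2 \leq \left(\frac{b}{A} + \frac{B}{a}\right) a_i b_i,
$$
and summing over $i = 1, 2, \ldots, n$ yields the key linear inequality
$$
\sum_{i=1}^n b_i^2 + \frac{bB}{aA} \sum_{i=1}^n a_i^2 \leq \left(\frac{b}{A} + \frac{B}{a}\right) \sum_{i=1}^n a_i b_i.
$$

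Next I would apply the AM--GM inequality $x + y \geq 2\sqrt{xy}$ to the two positive summands on the left-hand side, obtaining
$$
2 \sqrt{\frac{bB}{aA}} \cdot \sqrt{\left(\sum_{i=1}^n a_i^2\right)\left(\sum_{i=1}^n b_i^2\right)} \leq \left(\frac{b}{A} + \frac{B}{a}\right) \sum_{i=1}^n a_i b_i.
$$
Squaring both sides and dividing through by $4bB/(aA)$ produces exactly the claimed bound, once one recognises the algebraic identity
$$
\frac{(b/A + B/a)^2}{4\, bB/(aA)} = \frac{(AB + ab)^2}{4\, ABab} = \frac{1}{4}\left(\sqrt{\frac{AB}{ab}} + \sqrt{\frac{ab}{AB}}\right)^{\!2}.
$$

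For the equality characterisation I would back-propagate through the two inequality steps. Equality in the pointwise step requires $b_i/a_i \in \{b/A, B/a\}$ for every $i$, which combined with the boxes $a_i \in [a,A]$ and $b_i \in [b,B]$ forces every pair $(a_i, b_i)$ to equal either $(a, B)$ or $(A, b)$. Letting $p$ denote the number of indices of the first type and $q = n-p$ the number of the second, equality in the AM--GM step requires $\sum b_i^2 = (bB/(aA)) \sum a_i^2$, which after substituting the two admissible pair values simplifies to $pB/b = qA/a$; solving this jointly with $p + q = n$ recovers the stated formulas $p = n(A/a)/(A/a + B/b)$ and $q = n(B/b)/(A/a + B/b)$, which must therefore be integers. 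The most delicate point will be carrying out this equality analysis cleanly, since one has to combine a binary pointwise constraint with a balanced-sum constraint from AM--GM and verify that no other configuration of the $a_i$'s and $b_i$'s produces equality.
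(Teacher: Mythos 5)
The paper never proves this theorem: it is quoted as a classical result with a citation to the P\'olya--Szeg\"o book, so there is no in-paper argument to compare yours against. Your proposal is a correct, self-contained proof along the standard Diaz--Metcalf lines: the two-sided bound $b/A \leq b_i/a_i \leq B/a$ yields the pointwise quadratic inequality, summation gives $\sum b_i^2 + \frac{bB}{aA}\sum a_i^2 \leq \left(\frac{b}{A}+\frac{B}{a}\right)\sum a_i b_i$, and AM--GM together with the identity $\frac{(b/A+B/a)^2}{4bB/(aA)} = \frac{(AB+ab)^2}{4ABab}$ finishes the inequality. The equality analysis is also sound, though two small points deserve to be made explicit when you write it up: first, $b_i/a_i = b/A$ together with $b_i \geq b$ and $a_i \leq A$ forces $b_i = b$ and $a_i = A$ (and symmetrically for the value $B/a$), which is how the binary ratio constraint becomes the binary pair constraint $(a_i,b_i)\in\{(a,B),(A,b)\}$; second, reducing the AM--GM equality condition $\sum b_i^2 = \frac{bB}{aA}\sum a_i^2$ to $p\frac{B}{b} = q\frac{A}{a}$ requires cancelling a factor of $AB-ab$, which is nonzero precisely because $a<A$ and $b<B$ --- this is exactly where the strictness hypotheses enter, and it is the degenerate case $AB=ab$ that the paper's subsequent Remark addresses separately. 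Combined with $p+q=n$ this gives the stated integrality conditions, and the converse direction is immediate since both intermediate steps become equalities for such configurations; up to relabelling the indices (the sums being symmetric), your characterisation coincides with the one in the statement.
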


\begin{remark} By extending the proof of Theorem \ref{thm:polya}, if we allow $a = A$ or $b = B$, the
equality holds also if $AB = ab$, i. e. $a_1 = a_2 = \ldots = a_n = a = A$ and $b_1 = b_2 = \ldots
= b_n = b = B$.
\end{remark}

\begin{theorem}
Let $G$ be a simple graph with $n$ vertices and $m$ edges. Then
$$
32mn \cdot Sz(G) \leq (n+2)^2 \cdot PI_v^2 (G),
$$
with equality if and only if $m = 0$ or $n = 2$.
\end{theorem}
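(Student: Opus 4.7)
The plan is to apply the Polya--Szeg\"o inequality (Theorem~\ref{thm:polya}) to the sequences $a_i = 1$ and $b_i = n_u(e_i) + n_v(e_i)$ indexed over the $m$ edges $e_1, \ldots, e_m$ of $G$. For every edge $e \in E(G)$ we have $n_u(e), n_v(e) \geq 1$ and $n_u(e) + n_v(e) \leq n$, so the admissible ranges are $a = A = 1$ and $b = 2$, $B = n$. This places us in the degenerate regime $a = A$ covered by the remark following Theorem~\ref{thm:polya}; the inequality itself is still valid. Computing the Polya--Szeg\"o constant gives
$$
\frac{1}{4}\left( \sqrt{\frac{AB}{ab}} + \sqrt{\frac{ab}{AB}} \right)^2 = \frac{1}{4}\left( \sqrt{\frac{n}{2}} + \sqrt{\frac{2}{n}} \right)^2 = \frac{(n+2)^2}{8n},
$$
and Theorem~\ref{thm:polya} yields
$$
m \sum_{e \in E(G)} \bigl(n_u(e) + n_v(e)\bigr)^2 \;\leq\; \frac{(n+2)^2}{8n}\, PI_v^2(G).
$$

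The second ingredient is the edge-wise AM--GM bound $(n_u(e) + n_v(e))^2 \geq 4\, n_u(e)\, n_v(e)$. Summing over all edges gives $\sum_{e} (n_u(e) + n_v(e))^2 \geq 4\, Sz(G)$. Substituting this into the previous display and clearing denominators yields precisely $32 m n \cdot Sz(G) \leq (n+2)^2 \cdot PI_v^2(G)$.

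For the equality analysis, both steps must be tight. Tightness in AM--GM forces $n_u(e) = n_v(e)$ on every edge. Tightness in the Polya--Szeg\"o step, in the boundary regime $a = A$, requires by the remark that $AB = ab$, which here reads $b = B$, forcing $n = 2$; in this case $G$ is $K_2$, and a direct check confirms both sides equal $64$. The case $m = 0$ makes both sides vanish trivially. The main obstacle is spotting the right substitution into Theorem~\ref{thm:polya} (constant $a_i$, with $b_i$ the edge contribution to $PI_v$); once that is in place the proof is a short combination of Polya--Szeg\"o, AM--GM, and an arithmetic simplification of the constant $(n+2)^2/(8n)$.
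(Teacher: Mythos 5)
Your proposal is correct and follows essentially the same route as the paper: the Pólya--Szegő inequality applied to $a_i = 1$, $b_i = n_u(e_i) + n_v(e_i)$ with the constant bounded by $(n+2)^2/(8n)$, combined with the edge-wise AM--GM bound $\left(n_u(e)+n_v(e)\right)^2 \geq 4\,n_u(e)\,n_v(e)$, and the same equality analysis reducing to $m=0$ or $n=2$. No substantive differences to report.
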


\begin{proof}
Using the arithmetic-geometric mean inequality, it follows
\begin{equation}
\label{eq:first} 4 \sum_{e \in E (G)} n_u (e) n_v (e) \leq \sum_{e \in E (G)} (n_u (e) + n_v
(e))^2.
\end{equation}
By setting in Theorem \ref{thm:polya} the values $a_i = 1$ and $b_i = n_u (e_i) + n_v (e_i)$ for $i
= 1, 2, \ldots, m$, we have
$$
\sum_{i = 1}^m 1^2 \cdot \sum_{e \in E (G)} (n_u (e) + n_v (e))^2 \leq  \frac{(AB + ab)^2}{4ABab}
\cdot \left ( \sum_{e \in E (G)} n_u (e) + n_v (e) \right )^2.
$$
Since $A = a = 1$, we need to estimate upper and lower bounds for $b_i$, $b = \min_{e \in E (G)}
n_u (e) + n_v (e) \geq 2$ and $B = \max_{e \in E (G)} n_u (e) + n_v (e) \leq n$. By analyzing the
function $x + \frac{1}{x}$ it follows
$$
\frac{(B + b)^2}{4Bb} = \frac{1}{4} \left ( \frac{B}{b} + \frac{b}{B} \right) + \frac{1}{2} \leq
\frac{(n + 2)^2}{8n}.
$$
Finally, we get
\begin{equation}
\label{eq:second} m \cdot \sum_{e \in E (G)} (n_u (e) + n_v (e))^2 \leq \frac{(n + 2)^2}{8n} \cdot
\left ( \sum_{e \in E (G)} n_u (e) + n_v (e) \right )^2.
\end{equation}
Combining Equations \eqref{eq:first} and \eqref{eq:second}, we complete the proof. The equality
holds if and only if $p = \frac{n}{1 + \frac{n}{2}} = \frac{2n}{n+2}$ is integer or $b = B$, which
is possible only for $n = 2$. Therefore, the equality holds if and only if $m = 0$ (in this case
$Sz (G) = PI_v (G) = 0$) or $n = 2$.
\end{proof}

\bigskip {\bf Acknowledgement. } This work was supported by Research
Grant 144007 of Serbian Ministry of Science. The author is grateful to Professor Sandi Klav\v zar
for his generous help and valuable suggestions that improved this article. The author is also
thankful to the anonymous referees for their remarks, which have improved the presentation of this
paper.

\end{document}